\def\Max{{\rm Max}}
\def\mf{\mathbf}
\def\f{\mathfrak}
\def\z{{\ldots}}
\newtheorem{thm}{ {Theorem}}[section]
\newtheorem{cor}[thm]{ {Corollary}}
\newtheorem{lem}[thm]{ {Lemma}}
\newtheorem{prop}[thm]{ {Proposition}}
\newtheorem{ex}[thm]{ {Example}}
\newtheorem{defn}[thm]{ {Definition}}
\newtheorem{oss}[thm]{Remark}
\begin{document}

\title[Invertibility of ideals in pr\"ufer extensions]
{Invertibility of ideals in pr\"ufer extensions}
\author{Carmelo Antonio Finocchiaro}
\address[Carmelo Antonio Finocchiaro]{Institute of Analysis and Number Theory\\  University of Technology\\
	8010 Graz, Kopernikusgasse 24 - Austria}
\email{finocchiaro@math.tugraz.at}
\author{Francesca Tartarone}
\address[Francesca Tartarone]{Dipartimento di Matematica\\ Universit\`{a}
degli studi Roma Tre\\ Largo San Leonardo Murialdo 1, 00146 Roma,
Italy} \email{tfrance@mat.uniroma3.it}

\thanks{2010 {\it Mathematics Subject Classification}.
Primary: 13A15, 13A18, 13F05, 54A20. \\ The first named author was supported by a Post Doc Grant from the University of Technology of Graz (Austrian Science Fund (FWF): P 27816)}
\keywords{Manis valuation, invertible ideal, flatness}
\maketitle
\begin{abstract}
Using the general approach to invertibility for ideals in ring extensions  given by Knebush-Zhang in \cite{knzh}, we investigate about connections between faithfully flatness and invertibility for ideals in rings with zero divisors.
\end{abstract}
\section{Introduction}
It is well-known that in an integral domain  $D$ the invertible ideals (i.e. ideals $I$ for which there exists an ideal $J$ such that $IJ=D$) are exactly the nonzero finitely generated ideals that are also locally principal (\cite{bo}). In many cases, like for Noetherian domains, the hypothesis "finitely generated" can be omitted. For these domains the invertible ideals are exactly the nonzero locally principal  ideals (fact that is not true in general as it is shown by numerous examples).

In \cite{GV} the authors investigate about domains in which fathfully flat ideals are projective. We recall that in integral domains the notion of projective ideal is equivalent to the one of invertible ideal   (\cite[\S 2, Proposition 2.3]{knzh}). In general an invertible ideal is projective but not conversely. 

Since in integral domains locally principal ideals   coincide with the faithfully flat ideals (\cite[Theorem 8]{aa}), the question posed in \cite{GV} on the equivalence between fathfully flat and projective ideals leads to study domains in which nonzero locally principal ideals are invertible.

To this regard,  S. Bazzoni in \cite{bazzoni}  conjectured that  Pr\"ufer domains for which the equivalence    ``invertible ideal $\Leftrightarrow$ locally principal ideal" holds are exactly the ones with the finite character, i.e. each nonzero element of the domain belongs to finitely many maximal ideals.

 This conjecture was first proved in \cite{hmmt} and then it was extended to a larger class of domains  using the more general concepts of $t$-ideal and $t$-finite character (see, for instance, \cite{fipita}).



The notion of ideal, in this context,   is the one of fractional ideal; so it is related to the quotient field $K$ of the integral domain $D$ (we recall that a fractional ideal $I$ is a $D$-submodule of $K$ such that there exists $d \in D \setminus (0)$ for which $dI \subset D$).

Our aim in this paper is to study   the interplay among the concepts of invertible, faithfully flat and flat ideal in unitary rings $A$ with respect to   any their ring extension $B$. In this context we will generalize, for instance, the Bazzoni's conjecture about invertibility of ideals in Pr\"ufer domains, to the so-called Pr\"ufer extensions introduced by M. Knebush and D. Zhang in \cite{knzh}.

\section{flatness in ring extensions}
Let $A\subseteq B $ be a ring extension and let $S$ be  an $A$-submodule of $B$. Following \cite{knzh}, we say that $S$ is \textit{$B$-regular} if $SB=B$. Moreover, $S$ is \textit{$B$-invertible} if there exists an $A$-submodule $U$ of $B$ such that $SU=A$. By \cite[\S 2, Remark 1.10]{knzh}, if $S$ is $B$-invertible, then it is $B$-regular and  finitely generated, and $U$ is uniquely determined; precisely  $$U=[A:_BS]:=[A:S]:=\{x\in B:xS\subseteq A  \}.$$

More generally, given two $A$-submodules $S,T$ of $B$, we set $$[S:_BT]:=[S:T]:=\{x\in B:xT\subseteq S  \}.$$
It is well-known that a $B$-regular $A$-submodule $S$ of $B$  is $B$-invertible if and only if it is finitely generated and   locally principal  \cite[\S 2, Proposition 2.3]{knzh}. This fact generalizes the  characterization of invertible ideals in integral domains \cite[II \S 5, Theorem 4]{bo}.  From now on the term \emph{ideal} will always  mean an integral ideal.

\begin{prop}\label{inv-flat}
Let $A\subseteq B$ be a ring extension and let $\f a$ be an ideal of $A$. Then, the following conditions are equivalent. 
\begin{enumerate}[\rm(i)]
\item $\f a$ is $B$-invertible.
\item $\f a$ is $B$-regular, finitely generated and flat. 
\end{enumerate}
\end{prop}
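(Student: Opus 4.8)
The plan is to prove the two implications separately, in each case leaning on the two facts recalled just before the statement: that a $B$-invertible submodule is automatically $B$-regular and finitely generated with inverse $U=[A:\mathfrak a]$ (Remark 1.10), and that a $B$-regular finitely generated submodule is $B$-invertible precisely when it is locally principal (Proposition 2.3).

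For (i) $\Rightarrow$ (ii): by Remark 1.10, $B$-invertibility already delivers $B$-regularity and finite generation, so only flatness remains. Here I would reproduce the classical ``invertible implies projective'' argument in this relative setting. Since $\mathfrak a\,[A:\mathfrak a]=A$, we may write $1=\sum_{i=1}^n a_iu_i$ with $a_i\in\mathfrak a$ and $u_i\in U=[A:\mathfrak a]$. Because $u_i\mathfrak a\subseteq A$, the assignments $\phi\colon \mathfrak a\to A^n$, $x\mapsto(u_1x,\dots,u_nx)$, and $\psi\colon A^n\to\mathfrak a$, $(r_1,\dots,r_n)\mapsto\sum_i r_ia_i$, are well-defined $A$-linear maps with $\psi\circ\phi=\mathrm{id}_{\mathfrak a}$. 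Thus $\mathfrak a$ is a direct summand of the free module $A^n$, hence projective and in particular flat.

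For (ii) $\Rightarrow$ (i): by Proposition 2.3 it suffices to show that a $B$-regular, finitely generated, flat ideal $\mathfrak a$ is locally principal. Fix a maximal ideal $\mathfrak m$ of $A$ and pass to $A_\mathfrak m$. Then $\mathfrak a_\mathfrak m$ is a finitely generated flat $A_\mathfrak m$-module, and here I would invoke the fact that a finitely generated flat module over a local ring is free. It remains to check that its rank is at most $1$. Since localization is exact, $\mathfrak a_\mathfrak m$ is an ideal of $A_\mathfrak m$, so its basis elements are elements of $A_\mathfrak m$; if $\mathfrak a_\mathfrak m$ were free with basis $e_1,\dots,e_r$ and $r\ge 2$, then by commutativity the module relation $e_2\cdot e_1-e_1\cdot e_2=0$ forces $e_1=e_2=0$ by linear independence, contradicting that basis vectors are nonzero. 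Hence $\mathfrak a_\mathfrak m$ is free of rank $0$ or $1$, i.e.\ principal (the zero ideal being generated by $0$). As $\mathfrak m$ was arbitrary, $\mathfrak a$ is locally principal, and Proposition 2.3 yields $B$-invertibility.

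The main obstacle is the local step in the second implication. The reduction to local principality and the rank $\le 1$ computation are formal, but the assertion that a finitely generated flat module over a local ring is free is the real engine: over a non-Noetherian ring a finitely generated ideal need not be finitely presented, so one cannot simply quote ``finitely presented flat $=$ projective.'' I would either cite the general theorem or prove the key lemma directly: writing $0\to K\to A_\mathfrak m^n\to\mathfrak a_\mathfrak m\to 0$ with $n$ the minimal number of generators of $\mathfrak a_\mathfrak m$, flatness gives $K=\mathfrak m K$, and one then argues via the equational criterion for flatness that $K=0$. Care is also needed to ensure that the notion of ``locally principal'' used in Proposition 2.3 is exactly ``$\mathfrak a_\mathfrak m$ principal for every maximal ideal $\mathfrak m$,'' including the degenerate case in which the localization vanishes.
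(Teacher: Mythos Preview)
Your argument is correct, and the overall strategy (reduce (ii)$\Rightarrow$(i) to local principality, then quote \cite[\S 2, Proposition 2.3]{knzh}) matches the paper's. The execution differs in both directions, though.

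For (i)$\Rightarrow$(ii) the paper does not pass through projectivity: it checks flatness by hand, showing that the multiplication map $\mathfrak a\otimes\mathfrak b\to\mathfrak a$ is injective for every ideal $\mathfrak b$ via the same identity $1=\sum_i\alpha_iz_i$ you use. Your splitting $\mathfrak a\hookrightarrow A^n\twoheadrightarrow\mathfrak a$ is slicker and gives projectivity for free; the paper only records projectivity afterwards as a remark, citing \cite{knzh} rather than extracting it from the proof.

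For (ii)$\Rightarrow$(i) the paper avoids the general ``finitely generated flat over a local ring is free'' theorem altogether. It quotes a lemma of Sally--Vasconcelos \cite[Lemma 2.1]{sava}: if a finitely generated flat ideal of a local ring is not principal, then $\mathfrak aA_{\mathfrak m}=\mathfrak mA_{\mathfrak m}\cdot\mathfrak aA_{\mathfrak m}$. Since $\mathfrak aA_{\mathfrak m}$ itself \emph{is} finitely generated, Nakayama applies directly and gives $\mathfrak aA_{\mathfrak m}=0$; then $B$-regularity forces $B_{A\setminus\mathfrak m}=(\mathfrak aA_{\mathfrak m})B_{A\setminus\mathfrak m}=0$, a contradiction. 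This neatly sidesteps the obstacle you identified: there is no freeness claim, no equational-criterion argument, and Nakayama is invoked on the finitely generated module $\mathfrak aA_{\mathfrak m}$ rather than on the possibly non-finitely-generated kernel $K$. Your route is more self-contained and yields the stronger local conclusion that $\mathfrak aA_{\mathfrak m}$ is free of rank at most one (so you never need $B$-regularity to exclude the zero case), while the paper's route is shorter and makes the role of $B$-regularity explicit.
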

\begin{proof}
(i)$\Longrightarrow$(ii). We need only to show that $\f a$ is flat, i.e., that for any ideal $\f b$ of $A$ the canonical map $f:\f a\otimes \f b\longrightarrow \f a$ is injective. By assumption, there are elements $\alpha_1,\z,\alpha_n\in \f a$ and $z_1,\z,z_n\in [A:\f a]$ such that $1=\displaystyle\sum_{i=1}^n\alpha_iz_i$. Take now elements $a_1,\z,a_m\in \f a,b_1,\z,b_m\in \f b$ such that $f(\displaystyle\sum_{j=1}^ma_j\otimes b_j):=\displaystyle\sum_{j=1}^ma_jb_j=0$. Then we have
$$
\displaystyle\sum_{j=1}^ma_j\otimes b_j=\displaystyle\sum_{j=1}^ma_j\left(\sum_{i=1}^n\alpha_iz_i\right)\otimes b_j=\sum_{i=1}^n\alpha_i\otimes\left(\sum_{j=1}^m(a_jz_i)b_j
\right)=\sum_{i=1}^n\alpha_i\otimes 0=0,
$$
and this proves that $f$ is injective. 

(ii)$\Longrightarrow$(i). It suffices to show that $\f a$ is locally principal. Assume there is a maximal ideal $\f m$ of $A$ such that the finitely generated and flat ideal  $\f aA_{\f m}$ of $A_{\f m}$ is not principal. By \cite[Lemma 2.1]{sava}, we have $\f aA_{\f m}\f mA_{\f m}=\f aA_{\f m}$, thus Nakayama's Lemma implies $\f aA_{\f m}=0$. On the other hand, the fact that $\f a$ is $B$-regular implies $B_{\f m}=\f aB_{\f m}=(\f aA_{\f m})B_{\f m}=0$, a contradiction. The proof is now complete. 
\end{proof}

Propositon \ref{inv-flat} generalizes  what is already known for ideals in integral domains (an ideal is invertible if and only if it is flat and finitely generated \cite{vasc}). Moreover, note that, with the same notation of Proposition \ref{inv-flat}, a $B$-invertible ideal of $A$ is also projective \cite[Chapter 2, Proposition 2.3]{knzh}.

\begin{oss}\label{faithful-remark}
Let $A$ be a ring. 
\begin{enumerate}[(a)]
\item If $\f a$ is a faithfully flat ideal of $A$, then $\f a$ is  locally principal. As a matter of fact, let $\f m$ be a maximal ideal of $A$. If $\f aA_{\f m}$ is not principal, then $\f aA_{\f m}=\f aA_{\f m}\f mA_{\f m}$, by  \cite [Lemma 2.1]{sava}. Thus 
$$
(0)=\f aA_{\f m}/\f a\f m A_{\f m}=\left(\f a/\f a\f m  \right)_{(A-\f m)}=A_{\f m}\otimes_A(\f a/\f a \f m)=A_{\f m}\otimes_A(A/\f m \otimes_A \f a)=(A_{\f m}\otimes_A A/\f m) \otimes_A \f a
$$
and, since $\f a$ is faithfully flat, we have that $A/\f m=A_{\f m}\otimes_A A/\f m=(0)$, which is a contradiction. 
\item A locally principal  ideal of a ring is not necessarily faithfully flat. For example, let $p\neq q$ be two fixed prime integers and let $A:=\mf Z/pq\mf Z$. We claim that  the principal ideal $\f i:= pA$ of $A$ is flat but not faithfully flat. Indeed, the equality $A=\f i \oplus qA$ shows that $\f i$ is projective and a fortiori flat. Moreover, $\f i=\f i^2$ and, since $\f i$ is a maximal ideal of $A$, it follows that $\f i$ is not faithfully flat by \cite[Chapter 1, Section 3, Proposition 1]{bo}. 
\end{enumerate}
\end{oss}

In  Remark \ref{faithful-remark} (b)  it is easy to check that the ideal $pA$ is not regular ($p$ itself is a zero divisor). 
In the next result we will see that if we assume the $B$-regularity of an ideal in an extension $B$ of $A$, then a locally principal ideal is faithfully flat. 

\begin{prop}\label{faithfully}
Let $A\subseteq B$ be a ring extension and let $\f a$ be a $B$-regular ideal of $A$. Then, the following conditions are equivalent. 
\begin{enumerate}[\rm (i)]
\item $\f a$ is faithfully flat.
\item $\f a$ is locally principal. 
\end{enumerate}
\end{prop}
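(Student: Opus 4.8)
The plan is to prove the two implications separately, localizing at maximal ideals throughout and using $B$-regularity precisely at the point where a merely locally principal ideal would fail to be flat (cf. the example in Remark \ref{faithful-remark}(b)).

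For (i)$\Longrightarrow$(ii) there is nothing new to do: a faithfully flat ideal of any ring is locally principal, and this is exactly the content of Remark \ref{faithful-remark}(a). So this direction follows at once, without even invoking the $B$-regularity hypothesis.

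The substance lies in (ii)$\Longrightarrow$(i), which I would split into flatness first and faithfulness afterwards. Fix a maximal ideal $\f m$ of $A$. Since localization is exact and $A\subseteq B$, the induced map $A_{\f m}\to B_{\f m}$ is injective, so $B_{\f m}\neq 0$. As $\f a$ is locally principal, write $\f aA_{\f m}=xA_{\f m}$ for some $x\in A_{\f m}$. The first key step is to observe that $B$-regularity localizes: from $\f aB=B$ one obtains $xB_{\f m}=(\f aA_{\f m})B_{\f m}=\f aB_{\f m}=B_{\f m}$, so $x$ is a unit in $B_{\f m}$. Consequently $x$ is a non-zerodivisor of $A_{\f m}$ (if $xa=0$ in $A_{\f m}$, multiply by $x^{-1}$ in $B_{\f m}$ and use the injectivity of $A_{\f m}\to B_{\f m}$). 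Hence the multiplication map $A_{\f m}\to xA_{\f m}$ is an isomorphism, so $\f aA_{\f m}\cong A_{\f m}$ is free, in particular flat. Since flatness is a local property, $\f a$ is flat. I expect this to be the crux of the argument: it is exactly here that $B$-regularity upgrades ``locally principal'' to ``locally free'', which Remark \ref{faithful-remark}(b) shows to be indispensable.

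It then remains to prove faithful flatness. A flat $A$-module $M$ is faithfully flat if and only if $\f mM\neq M$ for every maximal ideal $\f m$; so suppose instead that $\f m\f a=\f a$ for some $\f m$. Localizing gives $\f mA_{\f m}\cdot\f aA_{\f m}=\f aA_{\f m}$ with $\f aA_{\f m}$ finitely generated (indeed principal), whence Nakayama's Lemma forces $\f aA_{\f m}=0$. Then, mimicking the argument in the proof of Proposition \ref{inv-flat}, $B$-regularity yields $B_{\f m}=\f aB_{\f m}=(\f aA_{\f m})B_{\f m}=0$, contradicting $B_{\f m}\neq 0$. Therefore $\f m\f a\neq\f a$ for every maximal ideal $\f m$, and $\f a$ is faithfully flat, which completes the proof.
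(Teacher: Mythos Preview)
Your proof is correct and follows essentially the same approach as the paper: both directions agree verbatim for (i)$\Longrightarrow$(ii), and for (ii)$\Longrightarrow$(i) both localize at a maximal ideal, use $B$-regularity to see that the local generator is a unit in $B_{A-\f m}$ (whence $\f aA_{\f m}$ is flat), and then argue $\f m\f a\neq\f a$ via the local picture. The only cosmetic difference is that for the faithfulness step the paper cancels $\f aA_{\f m}$ directly using its $B_{A-\f m}$-invertibility to obtain $\f mA_{\f m}=A_{\f m}$, whereas you invoke Nakayama and the nonvanishing of $B_{\f m}$; both are immediate.
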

\begin{proof}
(i)$\Longrightarrow$(ii) is Remark \ref{faithful-remark}(a). 

(ii)$\Longrightarrow$(i). Since $\f a $ is $B$-regular, for any maximal ideal $\f m$ of $A$, the principal ideal $\f aA_{\f m}$ is generated by an invertible element of $B_{A-\f m}$. Thus $\f aA_{\f m}$ is $B_{A-\f m}$-invertible and, a fortiori, a flat $A_{\f m}$-module. This proves that  $\f a$ is flat. Let $\f m$ be a maximal ideal of $A$. If $\f a\f m=\f a$, then $\f a\f m A_{\f m}=\f aA_{\f m}$ and, since $\f aA_{\f m}$  is $B_{A-\f m}$-invertible, $\f mA_{\f m}=A_{\f m}$, a contradiction. Thus $\f a\f m\neq \f a$, for any maximal ideal $\f m$ of $A$. By \cite[Chapter 1, Section 3, Proposition 1]{bo}, $\f a $ is faithfully flat. 
\end{proof}
\section{Ring extensions with the finite character}
\begin{defn}
Let $A \subseteq B$ be a ring extension. We say that the \emph{ring extension $A \subseteq B$ has the  finite character} if  any $B$-regular ideal $\f a$ of $A$ is contained in only finitely many maximal ideals (i.e., the set $V(\f a) \cap \Max(A)$ is finite where, as usual, $V(\f a)$ is the set of the prime ideals of $A$ containing $\f a$).
\end{defn}
It easily follows by definition that a ring extension $A\subseteq B$ has the finite character if and only if  any $B$-regular finitely generated ideal $\f a$ of $A$ is contained in only finitely many maximal ideals of $A$.
 
It is easy to see that if $A$ is an integral domain and $B$ is the quotient field of $A$, then $A \subseteq B$ has the finite character if and only if $A$  has the finite character. 

\bigskip
 Let $(\Omega, \leq )$ be a partially ordered set. We  denote by $\Max_\leq(\Omega)$ the (possibly empty) set of all maximal elements of $\Omega$. Two elements $x,y\in \Omega$ are said to be \emph{comaximal} if  there does not exist an element $m\in \Omega$ such that $x,y\leq m$. A subset $S$ of $\Omega$ is said to be \emph{comaximal} if any two elements $x,y\in S$ are comaximal. In the following Theorem \ref{dumi}, T. Dumitrescu and M. Zafrullah give conditions on  $\Omega$ under which the set of maximal elements greater than a fixed element $a \in \Omega$ is finite. This result will be useful in the following to characterize ring extensions $A \subseteq B$ with the finite character (see Corollary \ref{Bfinitechar}).
\begin{thm}\label{dumi}{\rm (\cite[Theorem 15]{duza})} Let $(\Omega,\leq)$ be a partially ordered set and let $\Gamma$ be a nonempty subset of $\Omega$. Assume that the following properties hold. 
\begin{enumerate}[\rm (a)]
\item For any $x\in \Omega$ there exists a maximal element $m\in \Omega$ such that $x\leq m$. 
\item If $a_1,a_2\in \Gamma$, $b\in \Omega$ and $a_1,a_2\leq b$, there is an element $a\in \Gamma$ such that $a_1,a_2\leq a\leq b$. 
\item If $b_1,b_2\in \Omega$ are comaximal elements, there are comaximal elements $a_1,a_2\in \Gamma$ such that $a_i\leq b_i$, for $i=1,2$. 
\end{enumerate}
Then, the following conditions are equivalent. 
\begin{enumerate}[\rm (i)]
\item For any $a\in \Gamma$, the set $\{x\in \Max_\leq(\Omega):x\geq a  \}$ is finite. 
\item For any $a\in \Gamma$, any comaximal subset of $\Gamma$ consisting of elements $\geq a$ is finite. 
\end{enumerate}
\end{thm}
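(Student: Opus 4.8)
The plan is to prove the two implications separately; (i)$\Rightarrow$(ii) is short, while (ii)$\Rightarrow$(i) carries the weight. Throughout, for $b\in\Omega$ I put $M(b):=\{x\in\Max_\leq(\Omega):x\geq b\}$, so (i) is exactly the finiteness of $M(a)$. First I would record two elementary facts. \emph{(A)} A maximal element $m$ is comaximal with every $x$ it does not dominate: a common upper bound $u$ of $m,x$ satisfies $u\geq m$, hence $u=m$ by maximality and $x\leq m$; in particular distinct maximal elements are always comaximal. \emph{(B)} Comaximality is preserved upwards: if $x\leq x'$, $y\leq y'$ and $x,y$ are comaximal, so are $x',y'$, since a common upper bound of $x',y'$ also bounds $x,y$. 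Combining (A) and (B) with hypothesis (a) yields the dictionary I will use repeatedly: for $b,b'\geq a$, the elements $b,b'$ are comaximal if and only if $M(b)\cap M(b')=\emptyset$ (a common upper bound can be enlarged, via (a), to a common maximal dominator).

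For (i)$\Rightarrow$(ii), fix $a\in\Gamma$ and a comaximal subset $S\subseteq\Gamma$ of elements $\geq a$. For each $s\in S$ choose by (a) a maximal $m_s\geq s$; then $m_s\in M(a)$. If $m_s=m_t$ for distinct $s,t\in S$, then $m_s$ bounds both $s$ and $t$, contradicting comaximality, so $s\mapsto m_s$ is injective and $|S|\leq|M(a)|<\infty$. (This in fact gives a uniform bound on the size of all such $S$.) Note that only (a) is used here.

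For (ii)$\Rightarrow$(i) I would argue by contraposition: assuming $M(a)$ infinite, I aim to produce an infinite comaximal subset of $\{c\in\Gamma:c\geq a\}$, contradicting (ii). The engine is a \emph{covering lemma} using all three hypotheses. Take by Zorn a maximal (under inclusion) comaximal family $C=\{c_1,\z,c_n\}\subseteq\{c\in\Gamma:c\geq a\}$; it is finite by (ii). I claim $M(a)=\bigcup_iM(c_i)$. Indeed, if some $m\in M(a)$ lay in no $M(c_i)$, then $m\not\geq c_i$, so by (A) each pair $(m,c_i)$ is comaximal; applying (c) gives comaximal $p_i,q_i\in\Gamma$ with $p_i\leq m$ and $q_i\leq c_i$. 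The inclusion $q_i\leq c_i$ forces $M(c_i)\subseteq M(q_i)$, and comaximality of $p_i,q_i$ gives $M(p_i)\cap M(q_i)=\emptyset$, hence $M(p_i)\cap M(c_i)=\emptyset$. Merging $p_1,\z,p_n$ and $a$ below $m$ by finitely many applications of (b) yields $c\in\Gamma$ with $a\leq c\leq m$ and $c\geq p_i$ for all $i$; then $M(c)\subseteq\bigcap_iM(p_i)$ is disjoint from every $M(c_i)$, so by the dictionary $c$ is comaximal with each $c_i$ and $C\cup\{c\}$ properly enlarges $C$, a contradiction.

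The main obstacle is what remains after this covering lemma. Covering forces some $M(c_j)$ to be infinite (as $M(a)$ is infinite and $n$ is finite), and the natural move is to \emph{split} $c_j$: picking distinct $m,m'\in M(c_j)$ and running (c) then (b) exactly as above, one gets $d,e\in\Gamma$ with $c_j\leq d,e$, comaximal with one another and, by (B), with every $c_i$, so that $\{c_i:i\neq j\}\cup\{d,e\}$ is comaximal of size $n+1$. The difficulty is that this does not enlarge $C$ itself (it replaces $c_j$), so it only yields comaximal families of arbitrarily large \emph{finite} size, whereas refuting (ii) demands a single \emph{infinite} comaximal family. Bridging this gap is the crux: because (b) merges only finitely many elements, one cannot in a single step force a new member to have co-infinite support and thereby preserve an infinite reservoir of still-unused maximal elements. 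I expect the heart of the argument to be a careful recursive bookkeeping that maintains an infinite pool $R_n\subseteq M(a)$ of maximal elements avoided by $c_1,\z,c_n$ and, at each stage, adjoins a member below a fresh $m\in R_n$ comaximal with all previous $c_i$ (guaranteed by the support-disjointness computation above) while protecting enough of $R_n$ to continue indefinitely. Ensuring this pool genuinely survives every stage is the delicate point that conditions (a)--(c) are tailored to secure.
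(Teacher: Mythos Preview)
The paper does not give its own proof of this theorem: it is quoted verbatim from \cite[Theorem~15]{duza} and used as a black box to derive the subsequent corollary on the $B$-finite character. So there is no argument in the paper to compare your proposal against.

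Regarding the proposal itself: the implication (i)$\Rightarrow$(ii), the ``dictionary'' $M(b)\cap M(b')=\emptyset\iff b,b'$ comaximal, and the covering lemma are all correct. For (ii)$\Rightarrow$(i) you have correctly isolated the genuine difficulty --- splitting $c_j$ replaces rather than enlarges the family, so iterated splitting only gives arbitrarily large finite comaximal families, not an infinite one --- and you leave the resolution as a conjectural ``recursive bookkeeping''. That bookkeeping is exactly the missing content, so as written the proposal is incomplete.

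One way to close the gap cleanly is to carry, alongside the growing comaximal family $\{c_1,\ldots,c_n\}\subseteq\Gamma_{\geq a}$, an auxiliary element $b_n\in\Gamma_{\geq a}$ with $M(b_n)$ infinite and $b_n$ comaximal with every $c_i$. Since $M(b_n)$ has at least two elements, your splitting construction yields a comaximal pair in $\Gamma_{\geq b_n}$; extend it to a maximal comaximal family $D\subseteq\Gamma_{\geq b_n}$, which is finite by (ii) and has $|D|\geq 2$ by construction. Your covering lemma applied at $b_n$ gives $M(b_n)=\bigcup_{d\in D}M(d)$, so some $d^\star\in D$ has $M(d^\star)$ infinite; set $b_{n+1}:=d^\star$ and pick $c_{n+1}\in D\setminus\{d^\star\}$. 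Since $c_{n+1},b_{n+1}\geq b_n$ and $b_n$ was comaximal with $c_1,\ldots,c_n$, your fact~(B) makes both comaximal with the earlier $c_i$, while $c_{n+1}$ and $b_{n+1}$ are comaximal as distinct members of $D$. The invariant is restored, the recursion never stalls, and $\{c_n:n\geq 1\}$ is an infinite comaximal subset of $\Gamma_{\geq a}$, contradicting (ii). The point you were unsure about --- preserving an infinite reservoir --- is handled not by controlling which maximal elements $c_{n+1}$ covers, but by always setting aside a single $d^\star$ whose $M(d^\star)$ is already infinite.
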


\begin{cor}\label{Bfinitechar}
Let $A\subseteq B$ be a ring extension. Then, the following conditions are equivalent. 
\begin{enumerate}[\rm (i)]
\item The ring extension $A \subseteq B$ has the finite character.
\item For any finitely generated and $B$-regular ideal $\f a$ of $A$, each collection of mutually comaximal finitely generated and $B$-regular ideals containing $\f a$ is finite. 
\end{enumerate}
\end{cor}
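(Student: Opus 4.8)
The plan is to deduce this from Theorem \ref{dumi} by choosing the poset and the distinguished subset appropriately. I would set $\Omega$ to be the set of all \emph{proper} $B$-regular ideals of $A$, partially ordered by inclusion, and let $\Gamma\subseteq\Omega$ be the set of proper, finitely generated, $B$-regular ideals of $A$. The key preliminary observation---the one that makes the whole identification work---is that any maximal ideal $\f m$ of $A$ containing a $B$-regular ideal $\f a$ is itself $B$-regular, since $\f m B\supseteq \f a B=B$. From this it follows that the maximal elements of $(\Omega,\subseteq)$ are exactly the $B$-regular maximal ideals of $A$: a maximal element is a proper ideal not properly contained in any other proper $B$-regular ideal, and if it failed to be a maximal ideal of $A$ it would sit inside a maximal ideal, which would then be a strictly larger member of $\Omega$. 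This is precisely why $\Omega$ must be taken to consist of proper ideals (with $A$ adjoined, the only maximal element would be $A$ itself, trivializing everything). I would also record the dictionary between the poset-comaximality of the statement and the usual ideal-theoretic one: two proper $B$-regular ideals $\f a,\f b$ admit no common upper bound in $\Omega$ if and only if $\f a+\f b=A$, because otherwise $\f a+\f b$ would be a proper $B$-regular ideal lying above both.

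Next I would verify hypotheses (a), (b), (c) of Theorem \ref{dumi}. Property (a) is immediate: a proper ideal lies in a maximal ideal by Zorn's lemma, and that maximal ideal belongs to $\Omega$ by the observation above. For (b), given $\f a_1,\f a_2\in\Gamma$ and $\f b\in\Omega$ with $\f a_1,\f a_2\subseteq\f b$, the ideal $\f a_1+\f a_2$ is finitely generated, $B$-regular (it contains $\f a_1$), proper (it is contained in $\f b$), and squeezed between the $\f a_i$ and $\f b$; so it serves as the required $\f a\in\Gamma$. Property (c) is the only one requiring a small construction: if $\f b_1,\f b_2\in\Omega$ are comaximal, then $\f b_1+\f b_2=A$, so I can pick $x\in\f b_1$ and $y\in\f b_2$ with $x+y=1$, together with finite subsets $F_i\subseteq\f b_i$ witnessing the $B$-regularity of $\f b_i$; then $\f a_1:=(F_1\cup\{x\})$ and $\f a_2:=(F_2\cup\{y\})$ are finitely generated, $B$-regular, proper, contained in $\f b_1$ and $\f b_2$ respectively, and comaximal since $\f a_1+\f a_2\ni x+y=1$.

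With (a)--(c) in hand, Theorem \ref{dumi} yields the equivalence of its conditions (i) and (ii) for this $\Omega$ and $\Gamma$, and it remains only to translate them back. Condition (i) of the theorem says that every proper finitely generated $B$-regular ideal lies in only finitely many maximal elements of $\Omega$, that is, in only finitely many maximal ideals of $A$; by the remark following the definition of the finite character this is exactly condition (i) of the corollary (the improper ideal $A$ lies in no maximal ideal, so it causes no trouble). Condition (ii) of the theorem, read through the comaximality dictionary above, says that for every proper finitely generated $B$-regular $\f a$, every family of mutually comaximal finitely generated $B$-regular ideals containing $\f a$ is finite---precisely condition (ii) of the corollary. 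The step I expect to be the genuine content is the modeling one: choosing $\Omega$ and $\Gamma$ correctly and checking that poset-comaximality matches $\f a+\f b=A$; I would also dispose at the outset of the degenerate case $\Gamma=\varnothing$ (equivalently, $A$ has no proper $B$-regular ideal), in which Theorem \ref{dumi} does not directly apply but both conditions of the corollary hold vacuously.
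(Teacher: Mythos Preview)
Your proposal is correct and follows essentially the same approach as the paper: the same choice of $\Omega$ (proper $B$-regular ideals) and $\Gamma$ (the finitely generated ones), the same verifications of (a)--(c), and the same translation back via $\Max_{\subseteq}(\Omega)=\Max(A)\cap\Omega$. Your treatment is slightly more explicit in spelling out the comaximality dictionary and in disposing of the degenerate case $\Gamma=\varnothing$, which the paper does not address.
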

\begin{proof}
Set
$$
\Omega:=\{\f a\subsetneq A:\f a \mbox{ is a }B\mbox{-regular ideal of }A   \}\quad \Gamma:=\{\f a\in \Omega: \f a \mbox{ is finitely generated}\}
$$
and order $\Omega$ by the inclusion $\subseteq$. Now, we claim that assumptions (a), (b) and (c) of Theorem \ref{dumi} are satisfied by $\Omega$ and $\Gamma$. Condition (a) follows immediately by the equality 
$
\Max_{\subseteq}(\Omega)=\Max(A)\cap \Omega
$, whose proof is straightforward. Now,  let $\f a_1,\f a_2\in \Gamma$ and let $\f b\in \Omega$ such that $\f a_1,\f a_2\subseteq \f b$. Thus $\f a:=\f a_1+\f a_2$ is a proper finitely generated $B$-regular ideal of $A$ such that $\f a_1,\f a_2\subseteq \f a \subseteq \f b$, and this proves condition (b). Now, take comaximal elements $\f b_1,\f b_2\in \Omega$. Then, the $B$-regular ideal $\f b_1+\f b_2$ cannot belong to $\Omega$, i.e., $\f b_1+\f b_2=A$. Pick elements $\beta_1\in \f b_1,\beta_2\in \f b_2$ such that $1=\beta_1+\beta_2$. On the other hand, since $\f b_1,\f b_2$ are $B$-regular, there are elements $\beta_{11},\z,\beta_{1n}\in \f b_1,\beta_{21},\z, \beta_{2m}\in \f b_2$ such that $$(\beta_{11},\z,\beta_{1n})B=(\beta_{21},\z, \beta_{2m})B=B.$$
Then the ideals $\f b_1':=(\beta_1,\beta_{11},\z,\beta_{1n} )A\subseteq \f b_1$, $\f b_2':=(\beta_2,\beta_{21},\z, \beta_{2m})A\subseteq \f b_2$ are finitely generated, comaximal and $B$-regular. This shows that also assumption (c) in Theorem \ref{dumi} is satisfied. The conclusion is now clear. 
\end{proof}
\begin{prop}\label{loc-fin-gen}
Let $A\subseteq B$ be a ring extension with the finite character. Then every $B$-regular and locally finitely generated ideal of $A$ is finitely generated. 
\end{prop}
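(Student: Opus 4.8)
The plan is to exhibit a finitely generated subideal of $\f a$ whose localization agrees with that of $\f a$ at every maximal ideal of $A$, and then to invoke the local-global principle (a module that vanishes at every maximal ideal is zero) to conclude that the two ideals coincide. The finite character hypothesis will serve to guarantee that only finitely many maximal ideals have to be ``corrected'', while local finite generation will let me correct each of them using only finitely many elements of $\f a$.

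The first step is to extract a finitely generated $B$-regular subideal. Since $\f a$ is $B$-regular, the relation $\f aB=B$ already holds for finitely many elements: I would choose $a_1,\z,a_n\in \f a$ and $b_1,\z,b_n\in B$ with $\sum_{i=1}^n a_ib_i=1$, so that $\f a_0:=(a_1,\z,a_n)A\subseteq \f a$ is a finitely generated $B$-regular ideal of $A$. By the finite character of $A\subseteq B$, the set $V(\f a_0)\cap \Max(A)$ is finite, say it equals $\{\f m_1,\z,\f m_s\}$.

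Next I would use that $\f a$ is locally finitely generated. For each $i$ the $A_{\f m_i}$-module $\f aA_{\f m_i}$ is finitely generated, and since every element of $A\w \f m_i$ becomes invertible in $A_{\f m_i}$, its generators may be taken of the form $x/1$ with $x\in \f a$. Adjoining these finitely many elements of $\f a$, ranging over $i=1,\z,s$, to $a_1,\z,a_n$, I would let $\f a_1\subseteq \f a$ be the finitely generated ideal they generate; by construction $\f a_1A_{\f m_i}=\f aA_{\f m_i}$ for each $i$. It then remains to check $\f a_1A_{\f m}=\f aA_{\f m}$ at an arbitrary maximal ideal $\f m$. If $\f m\supseteq \f a_0$, then $\f m$ is one of the $\f m_i$ and equality holds by construction; if $\f m\not\supseteq \f a_0$, then $\f a_0A_{\f m}=A_{\f m}$, and the chain $A_{\f m}=\f a_0A_{\f m}\subseteq \f a_1A_{\f m}\subseteq \f aA_{\f m}\subseteq A_{\f m}$ forces $\f a_1A_{\f m}=\f aA_{\f m}=A_{\f m}$. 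Hence $\f a/\f a_1$ localizes to zero at every maximal ideal, so $\f a=\f a_1$ is finitely generated.

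The step I expect to be the crux is the passage from $B$-regularity to a \emph{finitely generated} $B$-regular subideal $\f a_0$, for this is precisely what activates the finite character hypothesis and bounds the number of maximal ideals at which $\f a$ can fail to be generated by a fixed finitely generated subideal. A mild subtlety, resolved by the local check above, is that $\f a_0$ may be contained in strictly more maximal ideals than $\f a$; but at any maximal ideal missing $\f a_0$ both $\f a_0$ and $\f a$ already localize to the whole local ring, so no additional generators are required there.
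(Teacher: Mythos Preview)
Your proof is correct and follows essentially the same approach as the paper: extract a finitely generated $B$-regular subideal $\f a_0$, use finite character to list the finitely many maximal ideals containing $\f a_0$, adjoin local generators at each, and check equality locally. The only cosmetic difference is that the paper splits the maximal ideals over $\f a_0$ into those containing $\f a$ (handled via local finite generation) and those not containing $\f a$ (handled by adjoining a single element of $\f a\setminus\f m_i$), whereas you treat all of them uniformly via local finite generation; your version is slightly cleaner but the argument is the same.
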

\begin{proof}
Let $\f a$ be a $B$-regular and locally finitely generated ideal of $A$. By $B$-regularity, there exists a finitely generated ideal $\f a_0\subseteq \f a$ that is itself $B$-regular. By the finite character of the extension $A \subseteq B$, the set $\Max(A)\cap V(\f a_0)$ is finite, say 
$
\Max(A)\cap V(\f a_0)=\{\f m_1,\z,\f m_r, \f m_{r+1},\z, \f m_s   \}
$, where $\Max(A)\cap V(\f a)=\{\f m_1,\z,\f m_r \}$. For any $i=1,\z,r$, there is a finitely generated ideal $\f a_i\subseteq \f a$ such that $\f a A_{\f m_i}=\f a_iA_{\f m_i}$. For $j=r+1,\z,s$ pick an e
lement $a_i\in \f a-\f m_i$. Now consider the (finitely generated) ideal 
$
\f b:=\f a_0+\f a_1+\z+\f a_r+(a_{r+1},\z,a_s)A
$ of $A$. We claim that $\f a=\f b$. To prove this, it suffices to show the equality locally. If $\f m\in \Max(A)-V(\f a)$, then it is easy to infer that $\f aA_{\f m}=\f bA_{\f m}=A_{\f m}$. If $\f m=\f m_i$, for some $i=1,\z,r$, then 
$
\f a_iA_{\f m_i}\subseteq \f bA_{\f m_i}\subseteq \f aA_{\f m_i}=\f a_iA_{\f m_i}
$, that is $\f aA_{\f m_i}=\f bA_{\f m_i}=\f a_iA_{\f m_i}$. The proof is now complete. 
\end{proof}

The following result is an immediate consequence of  Propositions \ref{inv-flat}, \ref{faithfully} and \ref{loc-fin-gen}.

\begin{cor} \label{fin-char-inv}
Let $A\subseteq B$ be a ring extension with the finite character. Then, every $B$-regular and locally principal ideal of $A$ is $B$-invertible. 
\end{cor}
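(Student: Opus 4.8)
The plan is to chain together the three cited propositions; the corollary falls out as a direct consequence once the hypotheses are lined up correctly. Let $\f a$ be a $B$-regular and locally principal ideal of $A$. The first observation I would make is that being locally principal immediately makes $\f a$ \emph{locally finitely generated}: for every maximal ideal $\f m$ of $A$ the localization $\f aA_{\f m}$ is generated by a single element, and a fortiori by finitely many. This step is trivial but is the bridge that lets us feed $\f a$ into Proposition \ref{loc-fin-gen}.

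Next I would bring in the finite-character hypothesis, which is the only place it is genuinely used. Since $\f a$ is $B$-regular and, by the previous remark, locally finitely generated, Proposition \ref{loc-fin-gen} applies and tells us that $\f a$ is finitely generated. In parallel, I would invoke Proposition \ref{faithfully}: as $\f a$ is $B$-regular and locally principal, the equivalence recorded there shows that $\f a$ is faithfully flat, and in particular flat.

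At this point $\f a$ is known to be $B$-regular, finitely generated, and flat, so the implication (ii)$\Longrightarrow$(i) of Proposition \ref{inv-flat} yields directly that $\f a$ is $B$-invertible, which is the assertion. There is no real obstacle in this argument: each of the three implications is a clean citation, and the only point requiring a moment's care is checking that $B$-regularity is correctly carried through as an input hypothesis to each of the three propositions (it is the common thread shared by all of them). One could note for emphasis that the finite character is essential solely for the passage from ``locally finitely generated'' to ``finitely generated'' via Proposition \ref{loc-fin-gen}, while the flatness and invertibility steps do not require it.
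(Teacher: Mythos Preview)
Your argument is correct and is exactly the chain of implications the paper has in mind: locally principal gives locally finitely generated, Proposition~\ref{loc-fin-gen} then yields finitely generated, Proposition~\ref{faithfully} gives (faithfully) flat, and Proposition~\ref{inv-flat} closes to $B$-invertibility. The paper itself records no proof beyond pointing to these three propositions, so your write-up simply spells out the intended deduction.
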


\begin{lem}\label{technical}
Let $A\subseteq B$ be a ring extension and let $\mathcal F$ be a collection of parwise comaximal $B$-invertible ideals of $A$ containing a fixed ideal $\f a$ of $A$. Consider the  ideal 
$$
\f i:=\f i_{\mathcal F}:=\left\{ a\in A:a\prod_{i=1}^n\f b_i\subseteq \f a, \mbox{ for some } \f b_1,\z,\f b_n\in \mathcal F  \right\}
$$
of $A$. Then, the following properties hold. 
\begin{enumerate}[\rm (a)]
\item If $\f i$ is finitely generated, then there is a finite subset $\mathcal G:=\{\f b_1,\z,\f b_n\}$ of $\mathcal F$ such that 
$$
\f i=\left\{ a\in A:a\prod_{i=1}^n\f b_i\subseteq \f a\right\}
$$
\item If $\f m$ is a maximal ideal of $A$, we have 
$$
\f iA_{\f m}=\begin{cases}
\f a A_{\f m} & \mbox{ if }\f m\nsupseteq \f b, \mbox{ for any }\f b\in\mathcal F\\
[\f a:\f b_0]A_{\f m} & \mbox{ if } \f b_0 \mbox{ is the unique ideal in } \mathcal F \mbox{ contained in }\f m 
\end{cases}
$$
\end{enumerate}

\end{lem}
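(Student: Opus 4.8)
For part (a), the plan is to realise $\f i$ as a directed union of honest colon ideals of $A$ and then exploit finite generation. For each finite subset $\mathcal G=\{\f b_1,\z,\f b_n\}$ of $\mathcal F$ set $\f i_{\mathcal G}:=[\f a:_A\prod_{i=1}^n\f b_i]=\{a\in A:a\prod_{i=1}^n\f b_i\subseteq\f a\}$, which is an ideal of $A$; by definition $\f i=\bigcup_{\mathcal G}\f i_{\mathcal G}$ as $\mathcal G$ runs over the finite subsets of $\mathcal F$. Since every member of $\mathcal F$ is an ideal of $A$, enlarging $\mathcal G$ to $\mathcal G'$ only shrinks the product, $\prod_{\f b\in\mathcal G'}\f b\subseteq\prod_{\f b\in\mathcal G}\f b$, so $\f i_{\mathcal G}\subseteq\f i_{\mathcal G'}$ and the union is directed and increasing. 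If $\f i=(g_1,\z,g_r)A$ is finitely generated, each $g_l$ lies in some $\f i_{\mathcal G_l}$; taking $\mathcal G:=\mathcal G_1\cup\z\cup\mathcal G_r$ puts every $g_l$ into $\f i_{\mathcal G}$ by monotonicity, whence $\f i\subseteq\f i_{\mathcal G}$, and the reverse inclusion is trivial. This yields (a).

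For part (b) I would localise at a maximal ideal $\f m$ and first record the local behaviour of the members of $\mathcal F$. Each $\f b\in\mathcal F$ is $B$-invertible, hence finitely generated, $B$-regular and locally principal. If $\f b\nsubseteq\f m$ then $\f bA_{\f m}=A_{\f m}$; by pairwise comaximality at most one member of $\mathcal F$ is contained in $\f m$, so the two cases are exhaustive and $\f b_0$ is well defined. When $\f b_0\subseteq\f m$, local principality together with $B$-regularity give $\f b_0A_{\f m}=tA_{\f m}$ for some $t\in A_{\f m}$ that is invertible in $C:=B_{A-\f m}$, and since $\f a\subseteq\f b_0$ we get $\f aA_{\f m}\subseteq tA_{\f m}$.

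The easy inclusions come from localising the defining relations. If no $\f b\in\mathcal F$ is contained in $\f m$, then for $a\in\f i$ with $a\prod_{i=1}^n\f b_i\subseteq\f a$ every factor localises to $A_{\f m}$, giving $aA_{\f m}\subseteq\f aA_{\f m}$ and hence $\f iA_{\f m}\subseteq\f aA_{\f m}$; the reverse follows from $\f a\subseteq\f i$. If $\f b_0\subseteq\f m$ is the unique member contained in $\f m$, the same localisation kills every factor except possibly $\f b_0$, so $a\prod_{i=1}^n\f b_i\subseteq\f a$ gives either $a\f b_0A_{\f m}\subseteq\f aA_{\f m}$ or $aA_{\f m}\subseteq\f aA_{\f m}$; in both cases $a/1\in[\f a:\f b_0]A_{\f m}$ (using $\f a\subseteq[\f a:\f b_0]$ in the second), so $\f iA_{\f m}\subseteq[\f a:\f b_0]A_{\f m}$.

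The reverse inclusion $[\f a:\f b_0]A_{\f m}\subseteq\f iA_{\f m}$ is the one real obstacle, because $[\f a:\f b_0]=[\f a:_B\f b_0]$ is a priori a $B$-submodule that need not sit inside $A$, whereas $\f i\subseteq A$. The key preliminary is to show $[\f a:\f b_0]A_{\f m}\subseteq A_{\f m}$: any $\xi\in C$ with $\xi\f b_0A_{\f m}\subseteq\f aA_{\f m}$ satisfies $\xi tA_{\f m}\subseteq\f aA_{\f m}\subseteq tA_{\f m}$, and cancelling the unit $t$ of $C$ gives $\xi\in A_{\f m}$. Granting this, take $\xi\in[\f a:\f b_0]A_{\f m}$ and write $\xi=a/s$ with $a\in A$, $s\notin\f m$; from $a\f b_0A_{\f m}\subseteq\f aA_{\f m}$ and the finite generation $\f b_0=(c_1,\z,c_k)A$, for each $j$ there is $w_j\notin\f m$ with $w_jac_j\in\f a$, so $v:=\prod_jw_j\notin\f m$ satisfies $va\f b_0\subseteq\f a$, i.e. $va\in[\f a:_A\f b_0]\subseteq\f i$, whence $\xi=va/(vs)\in\f iA_{\f m}$. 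The two facts that make this last step succeed are the finite generation of $\f b_0$ (allowing finitely many denominators to be cleared simultaneously) and the inclusion $\f a\subseteq\f b_0$; without the local principality and $B$-regularity of $\f b_0$ one could not even guarantee $[\f a:\f b_0]A_{\f m}\subseteq A_{\f m}$, and the asserted local formula would break down.
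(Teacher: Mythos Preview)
Your argument is correct and follows essentially the same strategy as the paper's proof; part (a) is identical in substance, and for part (b) the inclusions $\f iA_{\f m}\subseteq\f aA_{\f m}$ (first case) and $\f iA_{\f m}\subseteq[\f a:\f b_0]A_{\f m}$ (second case) are handled just as in the paper.

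The one place you diverge is the reverse inclusion $[\f a:\f b_0]A_{\f m}\subseteq\f iA_{\f m}$, which you flag as ``the one real obstacle'' and treat by a local cancellation-plus-denominator-clearing argument. The paper instead disposes of it globally in one line: if $x\in B$ satisfies $x\f b_0\subseteq\f a$, then since $\f a\subseteq\f b_0$ and $\f b_0$ is $B$-invertible one multiplies $x\f b_0\subseteq\f b_0$ by $[A:\f b_0]$ to get $xA=x\f b_0[A:\f b_0]\subseteq\f b_0[A:\f b_0]=A$, hence $x\in A$, and then $x\in\f i$ directly from the definition of $\f i$ (with the single factor $\f b_0$). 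So $[\f a:\f b_0]\subseteq\f i$ already at the global level, and localisation is immediate. Your local route is a valid substitute---it is essentially the same cancellation argument carried out in $A_{\f m}$ rather than in $A$---but it is more elaborate than necessary.
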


\begin{proof}
The idea of the argument we are giving will follow, mutatis mutandis, the pattern of \cite[Lemma 1.10]{fipita}. 

If $\f i:=(x_1,\z,x_n)A$ and $\mathcal F_i \subseteq \mathcal F$ is a finite set of ideals such that $x_i\prod_{\f b\in \mathcal F_i}\f b\subseteq \f a$, then it suffices to take $\mathcal G:=\bigcup_{i=1}^n\mathcal F_i$. Thus (a) is clear. 

(b). Let $\f m$ be a maximal ideal of $A$. First, assume that none of the ideals in $\mathcal F$ is contained in $\f m$. Since clearly $\f a\subseteq \f i$, it is enough to prove that $\f iA_{\f m}\subseteq \f aA_{\f m}$. Consider a fraction $\frac{x}{s}\in \f iA_{\f m}$, with $x\in \f i$, let $s\in A-\f m$, and let $\f b_1,\z,\f b_n\in \mathcal F$ be such that $x\prod_{i=1}^n\f b_i\subseteq \f a$. For $i=1,\z, n$ choose elements $b_i\in \f b_i-\f m$. Then 
$$
\frac{x}{s}= \frac{x \prod_{i=1}^nb_i}{s\prod_{i=1}^nb_i}\in \f aA_{\f m}
$$
Now assume that there is an ideal $\f b_0\in\mathcal F$ such that $\f b_0\subseteq \f m$. The fact that the ideals in $\mathcal F$ are pairwise comaximal implies that $\f b\nsubseteq \f m$, for any $\f b\in \mathcal F-\{\f b_0\}$. Since $\f b_0$ is $B$-invertible and $\f a\subseteq \f b_0$, it follows easily that $[\f a:\f b_0]\subseteq \f i$. Let $x,s,\f b_1,\z,\f b_n$ be as before. If $\f b_0\neq \f b_i$, for any $i=1,\z,n$, then the same argument given above shows that $\frac{x}{s}\in \f aA_{\f m}\subseteq [\f a:\f b_0]A_{\f m}$. Assume now that $\f b_0=\f b_i$, for some $i$. Set $$p:=\begin{cases}
\prod_{j\neq i} b_j & \mbox{ if } \{\f b_1,\z,\f b_n  \}-\{\f b_0 \}\neq \emptyset\\
1 & \mbox{otherwise}
\end{cases}$$
where, as before, $ b_j\in \f b_j-\f m$, for $j\neq i$. Thus $xp\f b_0\subseteq \f a$, that is 
$$
\frac{x}{s}=\frac{xp}{sp}\in [\f a:\f b_0]A_{\f m}
$$
The proof is now complete. 
\end{proof}

\section{Pr\"ufer extensions}
In this section we use terminology and notation of \cite{knzh}. For the reader convenience, we give the basic definitions that are used in the following.

Let $R$ be a ring, $(\Gamma,\leq)$ be a (additive) totally ordered abelian group and $\infty\notin \Gamma$ be an element satisfying, by convention, $\gamma <\infty, \gamma\pm \infty:=\infty$, for any $\gamma \in \Gamma$. 
A Manis valuation on $R$ is a  map $v: R \rightarrow \Gamma \cup \{\infty\}$ satisfying the following conditions:

\begin{itemize}
\item $v(R)-\{\infty\}$ is a group.

\item $v(rs) = v(r) + v(s)$, for each $r, s \in R$.

\item $v(r+s) \geq \min\{v(r),v(s)\}$, for each $r,s\in R$.

\item $v(0) = \infty$. 

\end{itemize}

A \emph{Manis valuation subring} $A$ of $R$ is a subring of $R$ for which there exists a Manis valuation $v$ on $R$ with $A = A_v:=\{r\in R:v(r)\geq 0 \}$. Moreover,  $\f p:=\{r\in R:v(r)>0 \}$ is a prime ideal of $A_v$ and  $(A_v,\f p_v)$ is called  \emph{a $R$-Manis pair} (or \emph{a Manis pair in $R$}). 

For the reader convenience, we recall now the notion of generalized localization of a ring in an extension. Let $A\subseteq B$ be a ring extension, and let $\mathfrak p$ be a prime ideal of $A$. Let $j:B\longrightarrow B_{A\setminus \mathfrak p}$ denote the usual localization map. Then \emph{the generalized localization of $A$ at $\mathfrak p$ in $B$} is the subring $A_{[\f p]}:=j^{-1}(A_{\f p})$ of $B$ (\cite[p. 18]{knzh}). If $A$ is an integral domain with quotient field $K$ (in this case $K$ plays the role of $B$) and $\f p$ is a prime ideal of $A$, then the generalized localization of $A$ at $\mathfrak p$ in $K$ is the usual localization $A_{\f p}$.

\begin{defn} (\cite{knzh})
Let $A\subseteq B$ be a ring extension. We say that $A$ is weakly surjective in $B$ if $A_{\f m}=B_{A-\f m}$, for any maximal ideal $\f m$ of $A$ such that $\f mB\neq B$. We say that $A$ is \emph{Pr\"ufer in} $B$ or that $A\subseteq B$ \emph{is a Pr\"ufer extension} if $(A_{[\f m]}, \f m_{[\f m]})$ is a  Manis pair in $B$, for any maximal ideal $\f m$ of $A$. 
\end{defn}

We observe that in the case $A$ is an integral domain and $B$ is its quotient field, the ring extension $A \subseteq B$ is always weakly surjective.

We  recall that one of the many characterizations of Pr\"ufer domains says that  an integral domain is Pr\"ufer if and only if every nonzero finitely generated ideal is invertible (\cite{gi}).

The following theorem   is a  natural extension of the above result.  

\begin{thm}\label{prucharact}{\rm (\cite[Chapter 2, Theorem 2.1]{knzh})}
For a ring extension $A\subseteq B$, the following conditions are equivalent.
\begin{enumerate}[\rm (i)]
\item $A$ is Pr\"ufer in $B$.
\item $A$ is weakly surjective in $B$ and every finitely generated and $B$-regular ideal of $A$ is $B$-invertible. 
\end{enumerate}
\end{thm}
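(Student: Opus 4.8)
The plan is to reduce the equivalence to a family of local statements, one for each maximal ideal $\f m$ of $A$, and to set up a dictionary between the Manis condition on $(A_{[\f m]},\f m_{[\f m]})$ and local principality of finitely generated $B$-regular ideals. First I would record the reformulation of condition (ii): by the criterion that a $B$-regular ideal is $B$-invertible if and only if it is finitely generated and locally principal, together with Propositions \ref{inv-flat} and \ref{faithfully}, the second half of (ii) is equivalent to the assertion that every finitely generated $B$-regular ideal $\f a$ of $A$ has $\f aA_{\f m}$ principal for every maximal ideal $\f m$. Thus both (i) and (ii) get tested maximal-ideal by maximal-ideal.

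The main structural observation I would isolate is that a maximal ideal $\f m$ splits the analysis into two regimes. If $\f mB\neq B$ (i.e. $\f m$ is not $B$-regular), I claim that weak surjectivity is exactly the statement $A_{[\f m]}=B$: from $A_{\f m}=B_{A-\f m}$ and $A_{[\f m]}=j^{-1}(A_{\f m})$ for the localization map $j\colon B\to B_{A-\f m}$, one gets $A_{[\f m]}=j^{-1}(B_{A-\f m})=B$, and $(B,\f m_{[\f m]})$ is a Manis pair via the trivial valuation. Conversely, from the Manis condition at such an $\f m$ together with $\f mB\neq B$ one shows that the associated valuation must be trivial (a nontrivial valuation at $\f m$ would force $\f m$ to behave as a $B$-regular ideal, against $\f mB\neq B$), whence $A_{[\f m]}=B$ and, unwinding, $A_{\f m}=B_{A-\f m}$. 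So the two appearances of weak surjectivity in the equivalence correspond precisely to the non-$B$-regular maximal ideals, where both conditions collapse to the same triviality; note this is consistent with the domain case, where $\f mB=B$ for every nonzero $\f m$ and weak surjectivity is vacuous.

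The heart of the matter is the regime $\f mB=B$. Here I would use the Manis-pair criterion: $(A_v,\f p_v)$ is a Manis pair in $B$ if and only if $\f p_v$ is prime and for every $x\in B\setminus A_v$ there is $y\in\f p_v$ with $xy\in A_v\setminus\f p_v$. For (i)$\Rightarrow$(ii) I would take a finitely generated $B$-regular $\f a=(a_1,\ldots,a_n)A$ and show that $\f aA_{[\f m]}$ is principal by selecting a generator $a_k$ of least value $v(a_k)=\min_i v(a_i)$ and using the criterion, applied to the ratios, to write each $a_i$ as an $A_{[\f m]}$-multiple of $a_k$; $B$-regularity is what guarantees the relevant multipliers land in $A_{[\f m]}$ rather than merely having nonnegative value. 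For (ii)$\Rightarrow$(i) I would build the valuation at $\f m$ directly: local principality makes the monoid of principal $B$-regular fractional ideals of $A_{[\f m]}$ totally ordered by divisibility (any two generators are comparable, since they generate a locally principal ideal), yielding a totally ordered value group and a valuation $v$ with $A_v=A_{[\f m]}$, while $B$-regularity supplies, for each $x\notin A_{[\f m]}$, the center element $y$ demanded by the criterion, giving the surjectivity onto a group that makes $v$ Manis.

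The step I expect to be the main obstacle is precisely this valuation-theoretic core, and within it the asymmetry of Manis valuations: unlike in a field, $v(a_i)\geq v(a_k)$ does not by itself yield $a_i\in a_kA_{[\f m]}$, because elements of $B$ need not be invertible. The whole weight of the argument is to show that $B$-regularity (equivalently $\f mB=B$) repairs this, producing the multiplier that turns nonnegative value into genuine divisibility and the element $y$ that witnesses the Manis criterion. Verifying that the constructed value group is a group and that $v$ is well defined and multiplicative are the routine but essential checks surrounding it.
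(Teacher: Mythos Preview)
The paper does not prove this theorem. It is quoted verbatim from Knebush and Zhang's monograph as \cite[Chapter 2, Theorem 2.1]{knzh} and stated without proof, serving only as motivation for the subsequent definition of an \emph{almost Pr\"ufer} extension. So there is no proof in the paper to compare your proposal against.

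As a standalone sketch, your outline is along the right lines: the dichotomy between $B$-regular and non-$B$-regular maximal ideals is indeed the organizing principle, and the identification of weak surjectivity with the triviality of the Manis pair at non-$B$-regular $\f m$ is correct. The direction (ii)$\Rightarrow$(i) in the $B$-regular regime is where your sketch is thinnest: constructing a Manis valuation from local principality of finitely generated $B$-regular ideals requires more than ordering principal ideals by divisibility---one must show that the resulting ordered monoid embeds in a group and that every element of $B$ acquires a value, which is where the ambient machinery in \cite{knzh} (Pr\"ufer-Manis pairs, the characterization via $R$-overrings, etc.) is actually used. Your acknowledgment that ``$v(a_i)\geq v(a_k)$ does not by itself yield $a_i\in a_kA_{[\f m]}$'' correctly flags the central difficulty, but the repair you propose (``$B$-regularity supplies the multiplier'') is asserted rather than argued and is not obviously sufficient without further input. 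If you intend to write this up independently of \cite{knzh}, that step needs a genuine argument.
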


\begin{defn}
Let $A\subseteq B$ be a ring extension. We say that $A$ is \emph{almost Pr\"ufer in} $B$ (or that $A\subseteq B$ is \emph{an almost Pr\"ufer extension}) if every finitely generated $B$-regular ideal of $A$ is $B$-invertible. 
\end{defn}

As it is shown in the following Example \ref{almost prufer}, the   weakly surjectivity of an extension $A \subseteq B$ and the property that every finitely generated and $B$-regular ideal of $A$ is $B$-invertible are independent.

\begin{ex}\label{almost prufer}
Let $K$ be a field. Identify $K$ with a subring of $K^2$ via the diagonal embedding $x\mapsto(x,x)$. Then $K$ is almost Pr\"ufer on $K^2$, but it is not Pr\"ufer in $K^2$, since  $K$ is not weakly surjective in $K^2$, as it is easily seen. 
\end{ex}

Example \ref{almost prufer} shows that the class of Pr\"ufer ring extensions is properly contained in the class of almost Pr\"ufer extensions.

In Proposition \ref{faithfully} we show that, given a ring extension $A \subseteq B$, a $B$-regular ideal of $A$ is faithfully flat if and only if it is locally principal. Thus, we extend and prove Bazzoni's conjecture to almost Pr\"ufer ring extensions. We will see that in the following Theorem   the weakly surjective property of a ring extension $A \subseteq B$ is not needed to prove the statement. Thus, in this case, the notion of Pr\"ufer extension is too strong and we deal with almost Pr\"ufer extensions. 

\begin{thm}\label{bazzoni}
Let $A\subseteq B$ be an almost Pr\"ufer extension. Then, the following conditions are equivalent.
\begin{enumerate}[\rm (i)]
\item Every $B$-regular  and  locally principal ideal of $A$ is $B$-invertible.
\item The extension $A \subseteq B$ has the finite character. 
\end{enumerate}
\end{thm}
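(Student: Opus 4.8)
The plan is to treat the two implications very asymmetrically, since (ii)$\Rightarrow$(i) is essentially immediate while (i)$\Rightarrow$(ii) is the substantive Bazzoni-type direction. For (ii)$\Rightarrow$(i) I would simply invoke Corollary \ref{fin-char-inv}: once $A\subseteq B$ has the finite character, every $B$-regular and locally principal ideal of $A$ is already $B$-invertible, and no further hypothesis (not even almost Pr\"uferness) is needed.

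For (i)$\Rightarrow$(ii) I would argue by contraposition, using Corollary \ref{Bfinitechar} as the entry point. Assuming $A\subseteq B$ fails to have the finite character, Corollary \ref{Bfinitechar} yields a finitely generated $B$-regular ideal $\f a$ together with an \emph{infinite} family $\mathcal F$ of pairwise comaximal, finitely generated, $B$-regular ideals, each containing $\f a$. This is exactly where the almost Pr\"ufer hypothesis enters: every member of $\mathcal F$, being finitely generated and $B$-regular, is in fact $B$-invertible, so $\mathcal F$ is an infinite family of pairwise comaximal $B$-invertible ideals containing $\f a$ — precisely the hypothesis of Lemma \ref{technical}. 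I would then form the ideal $\f i=\f i_{\mathcal F}$ and use Lemma \ref{technical}(b) to compute it locally: at each maximal ideal $\f m$ one has either $\f iA_{\f m}=\f aA_{\f m}$, or $\f iA_{\f m}=[\f a:\f b_0]A_{\f m}=\f aA_{\f m}\,[A:\f b_0]A_{\f m}$ for the unique $\f b_0\in\mathcal F$ with $\f b_0\subseteq\f m$. Since $\f a$ and $[\f a:\f b_0]=\f a[A:\f b_0]$ are products of $B$-invertible (hence locally principal) modules, $\f iA_{\f m}$ is principal in every case, so $\f i$ is locally principal; and $\f i$ is $B$-regular because $\f a\subseteq\f i$. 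Hypothesis (i) then forces $\f i$ to be $B$-invertible, in particular finitely generated.

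The contradiction comes from comparing two local descriptions of $\f i$. Because $\f i$ is finitely generated, Lemma \ref{technical}(a) gives a finite subfamily $\{\f b_1,\z,\f b_n\}\subseteq\mathcal F$ with $\f i=[\f a:\prod_{i=1}^n\f b_i]$. As $\mathcal F$ is infinite, I may pick $\f c\in\mathcal F$ distinct from all the $\f b_i$; being proper, $\f c$ lies in some maximal ideal $\f m$, and pairwise comaximality makes $\f c$ the unique member of $\mathcal F$ inside $\f m$. On one side, Lemma \ref{technical}(b) gives $\f iA_{\f m}=[\f a:\f c]A_{\f m}=\f aA_{\f m}\,[A:\f c]A_{\f m}$. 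On the other side, each $\f b_i\neq\f c$ satisfies $\f b_i\nsubseteq\f m$, hence $\f b_iA_{\f m}=A_{\f m}$; localizing $\f i=[\f a:\prod\f b_i]$ (legitimate since $\prod\f b_i$ is finitely generated) yields $\f iA_{\f m}=\f aA_{\f m}$. Equating the two gives $\f aA_{\f m}\,[A:\f c]A_{\f m}=\f aA_{\f m}$; multiplying by $\f cA_{\f m}$ and using $\f c[A:\f c]=A$ produces $\f aA_{\f m}\,\f cA_{\f m}=\f aA_{\f m}$, and cancelling the $B_{A-\f m}$-invertible (unit-generated) ideal $\f aA_{\f m}$ forces $\f cA_{\f m}=A_{\f m}$, contradicting $\f c\subseteq\f m$.

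I expect the main obstacle to be this final local computation: making the cancellation of $\f aA_{\f m}$ rigorous (it rests on $\f aA_{\f m}$ being generated by a unit of $B_{A-\f m}$, which comes from $B$-invertibility) and handling the colon identity $[\f a:\f c]=\f a[A:\f c]$ together with its compatibility with localization. Everything else — extracting $\mathcal F$, upgrading its members to $B$-invertible ideals, and reading off the local values of $\f i$ — is bookkeeping with Lemma \ref{technical} and Corollaries \ref{Bfinitechar} and \ref{fin-char-inv}.
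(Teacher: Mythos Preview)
Your proposal is correct and follows essentially the same route as the paper's proof: both invoke Corollary~\ref{fin-char-inv} for (ii)$\Rightarrow$(i), and for (i)$\Rightarrow$(ii) both use Corollary~\ref{Bfinitechar} to reduce to a comaximal family $\mathcal F$, form $\f i_{\mathcal F}$, show it is $B$-regular and locally principal via Lemma~\ref{technical}(b), apply (i) to make it finitely generated, and extract a finite subfamily via Lemma~\ref{technical}(a). The only difference is cosmetic, in the final contradiction: the paper argues globally that an extra $\f b\in\mathcal F$ would satisfy $\prod_i\f b_i\subseteq\f b$ and hence $\f b=A$ by comaximality, whereas you localize at a maximal ideal containing the extra $\f c$ and cancel the unit-generated ideal $\f aA_{\f m}$ to reach $\f cA_{\f m}=A_{\f m}$.
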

\begin{proof}
The implication (ii)$\Longrightarrow$(i) is Corollary  \ref{fin-char-inv} and holds without requiring that $A\subseteq B$ is almost Pr\"ufer. 

(i)$\Longrightarrow$(ii). Let $\f a$ be a finitely generated and $B$-regular ideal of $A$, and let $\mathcal F$ be a collection of mutually comaximal finitely generated and $B$-regular ideals of $A$  containing $\f a$. In view of Corollary \ref{Bfinitechar}, it suffices to prove that $\mathcal F$ is finite. Consider the ideal of $A$ defined in Lemma \ref{technical}:
$$
\f i_{\mathcal F}:=\left\{ x\in A: x\prod_{i=1}^n\f b_i\subseteq \f a, \mbox{ for some }\f b_1,\z,\f b_n\in\mathcal F
\right\}.
$$
First note that $\f i_{\mathcal F}$ is $B$-regular, since $\f a$ is $B$-regular and $\f a\subseteq \f i_{\mathcal F}$. Since $A\subseteq B$ is an almost Pr\"ufer extension, $\f a$ and all ideals in $\mathcal F$ are $B$-invertible. In particular, for any ideal $\f b\in \mathcal F$, the ideal $[\f a:\f b]$ is $B$-invertible. Keeping in mind Lemma \ref{technical}(b), it follows that the $B$-regular ideal $\f i_{\mathcal F}$ is locally principal and thus, by condition (i), it is $B$-invertible and, a fortiori, finitely generated. By Lemma \ref{technical}(a), we can pick finitely many ideals $\f b_1,\z,\f b_n\in \mathcal F$ such that $\f i_{\mathcal F}=\left\{x\in A: x\prod_{i=1}^n\f b_i\subseteq \f a \right\}$. If $\mathcal F=\{ \f b_1,\z,\f b_n \}$ we have done. Otherwise, take a ($B$-invertible) ideal $\f b\in\mathcal F-\{\f b_1,\z,\f b_n \}$.  Since $\f a[A:\f b]\f b=\f a$, we have $\f a[A:\f b]\subseteq \f i_{\mathcal F}$, and thus $\f a[A:\f b]\prod_{i=1}^n\f b_i\subseteq \f a$. Since $\f a$ is $B$-invertible, last inclusion implies $[A:\f b]\prod_{i=1}^n\f b_n\subseteq A$, and thus $\prod_{i=1}^n\f b_i\subseteq \f b$. Since $\f b$ is comaximal with each $\f b_i$, there is an ideal $\f c\subseteq \f b$ such that
$$
A=\prod_{i=1}^n(\f b+\f b_i)=\prod_{i=1}^n\f b_i+\f c\subseteq \f b+\f c\subseteq \f b\subseteq A.
$$
This shows that $\mathcal F-\{\f b_1,\z,\f b_n  \}\subseteq \{A \}$. Thus $\mathcal F$ is finite and, by Corollary \ref{Bfinitechar}, the result is completely proved. 
\end{proof}
Let $A$ be a commutative ring with identity and let $T(A)$ be the total quotient ring of $A$. An element $a\in A$ is said to be \emph{regular} if it is not a zero divisor, and an ideal of $A$ is said to be \emph{regular} if it contains a regular element. Recall that $A$ is a \emph{Pr\"ufer ring} if every finitely generated regular ideal of $A$ is invertible. Clearly, an ideal $\f a$ of $A$ is regular if and only if $\f a $ is $T(A)$-regular; $\f a$ is invertible if and only if $\f a$ is $T(A)$-invertible; and the extension $A\subseteq T(A)$ has the finite character if and only if every regular element of $A$ is contained in only finitely many maximal ideals of $A$ (i.e., $A$ has the finite character). Thus, by Theorem \ref{bazzoni}, we have
\begin{cor}
Let $A$ be a Pr\"ufer ring. Then every regular locally principal ideal of $A$ is invertible if and only if $A$ has the finite character. 
\end{cor}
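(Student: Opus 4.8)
The plan is to apply Theorem~\ref{bazzoni} with the ambient extension taken to be $B:=T(A)$, the total quotient ring, using the translation dictionary recorded in the paragraph immediately preceding the statement. The first step is to recognize that the hypothesis ``$A$ is a Pr\"ufer ring'' is precisely the statement that $A\subseteq T(A)$ is an almost Pr\"ufer extension. Indeed, by the equivalences ``regular $\Leftrightarrow$ $T(A)$-regular'' and ``invertible $\Leftrightarrow$ $T(A)$-invertible'', the condition that every finitely generated regular ideal of $A$ is invertible is exactly the condition that every finitely generated $T(A)$-regular ideal of $A$ is $T(A)$-invertible, which is the definition of an almost Pr\"ufer extension. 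Hence Theorem~\ref{bazzoni} is applicable to the pair $A\subseteq T(A)$.

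Next I would match the two conditions of the corollary with those of Theorem~\ref{bazzoni}. Condition (i) of the corollary, that every regular locally principal ideal of $A$ is invertible, translates via the same dictionary into the assertion that every $T(A)$-regular locally principal ideal of $A$ is $T(A)$-invertible, i.e.\ condition (i) of Theorem~\ref{bazzoni} for $B=T(A)$. Condition (ii) of the corollary, that $A$ has the finite character, coincides with condition (ii) of Theorem~\ref{bazzoni} for $B=T(A)$: by the stated equivalence, the extension $A\subseteq T(A)$ has the finite character if and only if every regular element of $A$ lies in only finitely many maximal ideals, which is exactly the finite character of $A$.

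Having made these identifications, the conclusion follows immediately from the equivalence (i)$\Leftrightarrow$(ii) of Theorem~\ref{bazzoni}. There is no genuine obstacle in this argument, since all the substantive content resides in Theorem~\ref{bazzoni}; the role of the corollary is purely the bookkeeping that specializes the abstract notions of $B$-regularity and $B$-invertibility to the classical regular and invertible ideals by choosing $B=T(A)$. The one point that deserves a moment's care is confirming that a Pr\"ufer ring corresponds to an \emph{almost} Pr\"ufer extension of its total quotient ring, rather than to a Pr\"ufer extension; this is precisely why Theorem~\ref{bazzoni} is formulated for almost Pr\"ufer extensions and does not require the weakly surjective property, and it is exactly what makes the specialization go through cleanly.
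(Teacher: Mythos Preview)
Your proposal is correct and follows exactly the paper's approach: the paper derives the corollary directly from Theorem~\ref{bazzoni} applied to $B=T(A)$, using the dictionary in the preceding paragraph to identify a Pr\"ufer ring with an almost Pr\"ufer extension of its total quotient ring and to translate regular/invertible/finite character into their $T(A)$-relative counterparts. Your write-up simply spells out in full the identifications that the paper leaves implicit.
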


\bigskip

The authors would like to thank the referee for her/his careful reading and for some substantial comments.

\end{document}